\newtheorem{theorem}{Theorem}
\theoremstyle{plain}
\newtheorem{question}[theorem]{Question}
\newtheorem{definition}[theorem]{Definition}
\newtheorem{lemma}[theorem]{Lemma}
\newtheorem{proposition}[theorem]{Proposition}
\newcommand{\Z}{\mathbb Z}
\newcommand{\Q}{\mathbb Q}
\newcommand{\PP}{\mathbb P}
\newcommand{\C}{\mathbb C}
\newcommand{\CP}{\mathbb P}
\newcommand{\Pic}{\operatorname{Pic}}
\newcommand{\Sing}{\operatorname{Sing}}
\newcommand{\dashedlongrightarrow}{\xymatrix@1@=15pt{\ar@{-->}[r]&}}
\renewcommand{\longrightarrow}{\xymatrix@1@=15pt{\ar[r]&}}
\renewcommand{\mapsto}{\xymatrix@1@=15pt{\ar@{|->}[r]&}}
\renewcommand{\twoheadrightarrow}{\xymatrix@1@=15pt{\ar@{->>}[r]&}}
\newcommand{\hooklongrightarrow}{\xymatrix@1@=15pt{\ar@{^(->}[r]&}}
\newcommand{\congpf}{\xymatrix@1@=15pt{\ar[r]^-\sim&}}
\renewcommand{\cong}{\simeq}
\begin{document}
\title[Chern numbers of uniruled threefolds]{Chern numbers of uniruled threefolds} 

\author{Stefan Schreieder}
\address{Mathematisches Institut, Ludwig--Maximilians--Universit\"at M\"unchen, Theresienstr.\ 39, 80333 M\"unchen, Germany} 
\email{schreieder@math.lmu.de}

\author{Luca Tasin} 
\address{Dipartimento di Matematica F.\ Enriques, Universit\`a degli Studi di Milano, Via Cesare Saldini 50, 20133 Milano, Italy} 
\email{luca.tasin@unimi.it}

\date{\today}
%\date{October 23, 2018; \copyright{\ Stefan Schreieder and Luca Tasin 2018}}
%\subjclass[2010]{primary 32Q55, 57R20 ; secondary 14J99, 57R77} 

%\keywords{Chern numbers, minimal model program, Mori fibre spaces.} 

\begin{abstract}  
In this paper we show that the Chern numbers of a smooth Mori fibre space in dimension three are bounded in terms of the underlying topological manifold.
We also generalise a theorem of Cascini and the second named author on the boundedness of Chern numbers of certain threefolds to the case of negative Kodaira dimension.
\end{abstract}

\maketitle

\section{Introduction}

One of the most basic numerical invariants of a compact complex manifold are its Chern numbers.
While these numbers depend only on the topological type of the complex structure of the tangent bundle, they are in general not invariants of the underlying topological manifold, but really depend on the complex structure. 
In fact, answering a question of Hirzebruch from 1954, all linear combinations of Chern and Hodge numbers which are topological invariants of smooth complex projective varieties have recently been determined in \cite{kotschick-PNAS,kotschick-advances,kotschick-schreieder}.

Generalising Hirzebruch's question, Kotschick asked \cite{kotschick-JTOP} (see also \cite{LSZ}) whether the topology of the underlying smooth manifold determines the Chern numbers of smooth complex projective varieties at least up to finite ambiguity.
In \cite{schreieder-tasin}, we have shown that in dimension at least four, this question has in general a negative answer.
That is, there are smooth real manifolds that carry infinitely many complex algebraic structures such that the corresponding Chern numbers are unbounded, except for $c_n$, $c_1c_{n-1}$ and $c_2^2$ which are known to be bounded (see \cite{LW90} for the non-trivial one  $c_1c_{n-1}$).
This result left however open the case of threefolds, where it remains unknown whether $c_1^3$ is determined up to finite ambiguity by the underlying smooth manifold.

In \cite{cascini-tasin}, Cascini and the second named author started to investigate the boundedness question for Chern numbers via methods from the minimal model program, see also \cite{MST,ST} for further developments.
In dimension three, the approach in \cite{cascini-tasin} is motivated by the Miyaoka--Yau inequality, which implies that for a minimal smooth complex projective threefold of non-negative Kodaira dimension, $c_1^3$ can be bounded in terms of the Betti numbers of $X$, see e.g.\ \cite[Proposition 9]{ST}.
This observation makes it natural to approach the boundedness of $c_1^3(X)$ by trying to bound the effect on $c_1^3(X)$ of the steps in the minimal model program for $X$.
This leads to a positive answer for the boundedness question for many smooth projective threefolds of non-negative Kodaira dimension whose minimal model program is a composition of 
blow-downs to points and smooth curves in smooth loci, see \cite[Corollary 1.5]{cascini-tasin}.

In this paper we focus on the case of threefolds of negative Kodaira dimension.
The main difficulty that we face in this case is that the aforementioned Miyaoka--Yau inequality, which was essential for the case of non-negative Kodaira dimension, does not hold any longer.
It is also known by examples of LeBrun \cite{lebrun}, that the boundedness does not hold in the non-K\"ahler case.
Nonetheless, for any smooth K\"ahler threefold $X$ we can run a minimal model program thanks to \cite{HP15,HP16}.
If $X$ is uniruled then we arrive at a Mori fibre space $Y\to B$, i.e.\ a K\"ahler threefold $Y$ with at most terminal singularities together with a morphism of relative Picard rank one with connected fibres to a complex K\"ahler variety $B$ of smaller dimension whose general fibre is Fano.

The first result of this paper is the following.

\begin{theorem} \label{thm:MFS}
	Let $(X_i)_{i\geq 0}$ be a sequence of Mori fibre spaces, where $X_i$ are smooth K\"ahler threefolds.
	If each $X_i$ is homeomorphic to $X_0$, then the sequence of Chern numbers $c_1^3(X_i)$ is bounded.
\end{theorem}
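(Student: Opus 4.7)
My plan is to split the sequence according to the dimension $d := \dim B_i$ of the base of the Mori fibre space $f_i \colon X_i \to B_i$. After passing to a subsequence we may assume $d \in \{0,1,2\}$ is constant, and the goal in each case is to bound $c_1^3(X_i) = -K_{X_i}^3$ using topological invariants of $X_0$ together with the Mori fibration structure.

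If $d = 0$, then each $X_i$ is a smooth Fano threefold, and by the Iskovskikh--Mori--Mukai classification smooth Fano threefolds lie in only finitely many deformation families; since $c_1^3$ is a deformation invariant, the values $c_1^3(X_i)$ take only finitely many values, and this subcase needs no topological input at all. If $d = 1$, then $f_i \colon X_i \to C_i$ is a del Pezzo fibration over a smooth curve. Since the general fibre is rational, one expects $b_1(X_i) = 2g(C_i)$ via the Leray spectral sequence, so $g(C_i)$ is fixed by topology, and the degree $K_{F_i}^2 \in \{1,\dots,9\}$ of the general fibre is automatically bounded. Then $\chi(\OO_{X_i}) = 1 - g(C_i)$ is topological; combined with Hirzebruch--Riemann--Roch giving $c_1 c_2(X_i) = 24 \chi(\OO_{X_i})$, and the identity $c_1^3 = c_1 \cdot p_1 + 2 c_1 c_2$ (where $p_1$ is the first Pontryagin class, a topological invariant), bounding $K_{X_i}^3$ reduces to bounding the pairing $c_1(X_i) \cdot p_1(X_i)$, which can be computed from the decomposition $K_{X_i} = f_i^* K_{C_i} + K_{X_i/C_i}$ using controlled numerical invariants of the relative canonical sheaf.

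The main case is $d = 2$: a conic bundle $f_i \colon X_i \to S_i$ over a smooth K\"ahler surface. The Leray decomposition gives $H^2(X_i; \Q) \cong f_i^* H^2(S_i; \Q) \oplus \Q \langle \xi_i \rangle$ with $\xi_i \cdot F_i = 1$ on a general fibre $F_i$, so $b_2(S_i) = b_2(X_i) - 1$ is fixed, and the topology of the discriminant $\Delta_i \subset S_i$ is constrained by $\chi_{\mathrm{top}}(X_i) = 2\chi_{\mathrm{top}}(S_i) - \chi_{\mathrm{top}}(\Delta_i)$. Using the standard conic-bundle push-forward formula $f_{i*}(K_{X_i}^2) = 4K_{S_i} + \Delta_i$ together with $K_{X_i} = f_i^* K_{S_i} + R_i$ for the relative canonical $R_i$, one obtains an expression for $K_{X_i}^3$ as a universal polynomial in $K_{S_i}^2$, $K_{S_i} \cdot \Delta_i$, $\Delta_i^2$ and push-forwards of $R_i^2, R_i^3$; the plan is then to bound each of these using Noether's formula on $S_i$, the fixed intersection form of $X_i$, and the rigidity afforded by running the K\"ahler MMP on $S_i$ whose Picard rank is at most $b_2(X_i) - 1$.

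The heart of the argument, and the main obstacle, lies in this conic bundle case: while the cohomological data of $S_i$ and $\Delta_i$ are topologically constrained, it is not automatic that numerical invariants such as $K_{S_i}^2$ or $\Delta_i^2$ are bounded, because $S_i$ could a priori vary along long sequences of blow-downs that preserve $b_2$ but alter intersection numbers, and similarly $\Delta_i$ could have unbounded self-intersection even for fixed Euler characteristic. Showing that the pair $(S_i, \Delta_i)$---arising from an MMP on $X_i$ in the K\"ahler setting of \cite{HP15,HP16}---is rigid enough to force bounded intersection numbers, by exploiting both the constraint $c_1(X_i) \equiv w_2(X_0) \bmod 2$ coming from the fixed topology and the $f_i$-ampleness of $-K_{X_i}$, is where I expect the real work to lie.
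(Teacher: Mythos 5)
Your overall architecture matches the paper's: split by the dimension of the base, dispose of the case of a point by boundedness of Fano threefolds, defer the case of a curve to known results, and isolate the conic bundle case over a surface as the heart of the matter. But in that main case your proposal stops exactly where the proof has to begin: you correctly observe that $K_{S_i}^2$, $K_{S_i}\cdot\Delta_i$ and $\Delta_i^2$ are not automatically bounded and you state that establishing this ``is where I expect the real work to lie'' --- so the key step is missing, and the devices you point to (the congruence $c_1\equiv w_2 \bmod 2$, the $f_i$-ampleness of $-K_{X_i}$, rigidity of an MMP on $S_i$) are not what closes the gap.

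The missing ingredient is Lemma \ref{lem:f_*p1}: for a standard conic bundle $f\colon X\to S$ one has $f_\ast p_1(X)\equiv -3D$, where $D$ is the discriminant curve. Combined with Novikov's theorem (so that $p_1$ is a homeomorphism invariant modulo torsion) and with the observation that the hyperplane $f_i^\ast H^2(S_i,\Q)\subset H^2(X_0,\Q)$ is contained in the cubic $\{\alpha\mid\alpha^3=0\}$ and hence, after passing to a subsequence, may be taken independent of $i$, the projection formula shows that the linear form $\alpha\mapsto -3D_i.\alpha$ on $H^2(S_i,\Q)$ is independent of $i$; since the intersection pairing on $H^2(S_i,\Q)$ is itself recovered from the cubic form of $X_0$ and is perfect, the class $[D_i]$, and in particular $D_i^2$, is pinned down. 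Once this is in place, the quantities you list do fall under control: $K_{S_i}^2=3\sigma(S_i)+2e(S_i)$ is bounded by the Betti numbers of $S_i$ (which are determined by those of $X_i$), $p_a(D_i)$ is bounded via $b_3(X_i)$ by Lemma \ref{lemma:bettiMfs}, hence $K_{S_i}.D_i$ is bounded by adjunction, and $K_{X_i}^3$ is then computed from $(f_i)_\ast K_{X_i}^2\equiv -4K_{S_i}-D_i$. Two smaller points: the paper first invokes Lin's algebraic approximation theorem to replace the K\"ahler conic bundles by projective ones before applying the structure theory of Lemma \ref{lemma:bettiMfs}, a reduction you omit; and your push-forward formula has the wrong sign --- it should read $f_{i\ast}K_{X_i}^2\equiv-4K_{S_i}-D_i$.
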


The above result should be compared to the fact that all known examples of sequences of homeomorphic varieties with unbounded Chern numbers are Mori fibre spaces, and in fact projective bundles (see \cite{schreieder-tasin}).
We therefore believe that together with the aforementioned results from \cite{cascini-tasin}, the above theorem puts forward strong evidence for the conjecture that the Chern numbers of smooth projective threefolds are determined up to finite ambiguity by the underlying smooth manifold.

If $X \to B$ is a Mori fibre space and $X$ is a smooth K\"ahler threefold, then there are three main cases to consider, depending on the dimension of $B$. 
If $B$ is a point, then $X$ is a Fano variety and we conclude because Fano varieties of fixed dimension form a bounded family. 
If $B$ is a curve, then it is smooth projective and $X$ is also projective. 
Since the Pontryagin classes are up to torsion homeomorphism invariants by Novikov's theorem \cite{novikov}, \cite[Proposition 26]{ST} proves the above theorem in case all but finitely many of the $X_i$ are Mori fibre spaces over points or curves.
Using Novikov's theorem \cite{novikov} once again, Theorem \ref{thm:MFS} thus follows from the following more precise result about Mori fibre spaces over surfaces, where we denote by $H^\ast_{tf}(X,\Z)$ the quotient of $H^\ast(X,\Z)$ by the subgroup of all torsion classes.

\begin{theorem} \label{thm:MFS:surfaces}
	Let $(X_i)_{i\geq 0}$ be a sequence of smooth K\"ahler threefolds admitting a conic bundle structure $f_i:X_i\longrightarrow S_i$ of relative Picard number 1 over a smooth K\"ahler surface $S_i$.
	If there is an isomorphism of graded rings $H^\ast_{tf}(X_i,\Z)\cong H^\ast_{tf}(X_0,\Z)$ which respects the first Pontryagin classes, then the sequence of Chern numbers $c_1^3(X_i)$ is bounded. 
\end{theorem}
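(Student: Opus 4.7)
The plan is to translate the topological hypothesis into control on the base surfaces $S_i$ via the Leray spectral sequence, decompose $c_1(X_i)^3$ using the relative Picard number one assumption, and then bound the resulting intersection numbers using the standard conic bundle formulas together with the Pontryagin class hypothesis.

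Since the general fibre of $f_i: X_i \to S_i$ is $\PP^1$, the Leray spectral sequence degenerates rationally and gives $H^k(X_i, \Q) \cong H^k(S_i, \Q) \oplus H^{k-2}(S_i, \Q)$, so the assumed isomorphism of torsion-free cohomology rings forces the Betti numbers of $S_i$ to be bounded independently of $i$. Relative Picard number one gives $H^2_{tf}(X_i, \Z) = f_i^* H^2_{tf}(S_i, \Z) \oplus \Z \cdot \eta_i$ up to finite index, where $\eta_i$ has degree $d_i \in \{1,2\}$ on a general fibre; writing $c_1(X_i) = f_i^* \alpha_i + d_i \eta_i$ rationally, expanding the cube and applying the projection formula yields
\[
c_1(X_i)^3 \;=\; 3 d_i^2 \, \alpha_i^2 \;+\; 3 d_i^2 \, \alpha_i \cdot D_i \;+\; d_i^3 \, E_i,
\]
where $D_i := (f_i)_*(\eta_i^2) \in H^2(S_i, \Q)$ and $E_i := \int_{X_i}\eta_i^3 \in \Q$. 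The classical conic bundle formula $(f_i)_*(c_1(X_i)^2) = 4 K_{S_i} + \Delta_i$ (up to sign), with $\Delta_i \subset S_i$ the discriminant curve, translates to $d_i^2(2\alpha_i + D_i) = 4 K_{S_i} + \Delta_i$, giving a first linear relation among $\alpha_i$, $D_i$, $K_{S_i}$ and $\Delta_i$.

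To bound the remaining intersection numbers I would combine this with two further inputs. Riemann--Roch gives $c_1 \cdot c_2 = 24 \chi(\OO_{X_i})$; since $h^{0,1} \leq b_1/2$ and $h^{0,2} \leq b_2/2$ for K\"ahler threefolds and $h^{0,3} = 0$ by uniruledness, the right side is bounded by the Betti numbers of $X_i$, so $c_1(X_i) \cdot c_2(X_i)$ is bounded. The Pontryagin hypothesis then fixes the class $p_1(X_i) = c_1(X_i)^2 - 2c_2(X_i) \in H^4_{tf}(X_i, \Z)$ under the ring isomorphism, so all its pairings with classes in $H^2_{tf}(X_i, \Z)$ are uniformly bounded; pairing with $f_i^*\beta$ for $\beta$ running over a basis of $H^2_{tf}(S_i, \Z)$ and with $\eta_i$ produces enough relations on $\alpha_i^2$, $\alpha_i\cdot D_i$, $\alpha_i \cdot K_{S_i}$, $K_{S_i}^2$, $K_{S_i}\cdot \Delta_i$ and $\Delta_i^2$ to solve the expansion above.

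The main obstacle I anticipate is the simultaneous control of the ``invisible'' term $E_i = \int \eta_i^3$ and of the invariants of the discriminant curve $\Delta_i$ on a K\"ahler surface $S_i$ which itself may vary in an unbounded complex-analytic family (only the Betti numbers of $S_i$ are bounded a priori). I would address this by exploiting positivity: after using the Hodge index theorem on $S_i$ to pin down $\alpha_i$ and $D_i$ up to finitely many possibilities in the lattice $H^2_{tf}(S_i, \Z)$ of bounded rank, the quantity $E_i$ is forced by integrality of the intersection form on $X_i$, together with the remaining relations coming from $p_1$, to lie in a bounded range. The K\"ahler-but-not-necessarily-projective setting should not be an obstruction, since for surfaces the relevant positivity theory (Hodge index, K\"ahler cone) works identically.
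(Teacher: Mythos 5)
Your overall strategy---use the ring isomorphism to control the base, push everything down to $S_i$ via the conic bundle formulas, and bring in the $p_1$ hypothesis to control the discriminant---is the same as the paper's, but two steps that carry the actual weight of the argument are missing. First, you never rigidify the fibration data inside the fixed ring $H^\ast_{tf}(X_0,\Q)$. The paper observes that $f_i^\ast H^2(S_i,\Q)$ is a hyperplane contained in the cubic cone $\{\alpha^3=0\}$, hence (after passing to a subsequence) is a \emph{fixed} subspace, and that the fibre class $\ell_i$ is then pinned down by $K_{X_i}.\ell_i=-2$ and integrality. Without this, your statement that the pairings of $p_1(X_i)$ with $f_i^\ast\beta$ ``are uniformly bounded'' has no content (a fixed linear form is unbounded on a lattice of rank $\geq 2$; what one needs is that the \emph{restriction} of $p_1$ to a fixed subspace, read through a fixed pairing, determines a fixed class on $S_i$), and your term $E_i=\int\eta_i^3$ is genuinely uncontrolled: the appeal to ``integrality'' and the Hodge index theorem does not bound it. In the paper this term never needs to be bounded, because $y$ is chosen once and for all in $H^2(X_0,\Q)$ by the normalisation $y.\ell=-2$, $y^2\in\ell\cdot\Q$, so $y^3$ is literally a constant.

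Second, the assertion that the $p_1$-pairings ``produce enough relations \ldots to solve the expansion'' is hand-waving exactly where the work is. The paper proves the identity $f_\ast p_1(X)\equiv -3D$ (Lemma~\ref{lem:f_*p1}, via computing $f_\ast c_2$ on preimages of curves), which is what converts the Pontryagin hypothesis into the statement that the class of the discriminant curve $D_i$, and hence $D_i^2$, is independent of $i$. This input is genuinely needed: bounding $p_a(D_i)$ (from $b_3$) and $K_{S_i}^2$ (from Noether's formula) bounds $K_{S_i}.D_i+D_i^2$ but not $D_i^2$ alone, and the final identity $16z_i^2=16K_{S_i}^2+8K_{S_i}D_i+D_i^2$ requires all three quantities. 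Two smaller points: your normalisation is inconsistent (if $\eta_i$ has fibre degree $d_i$ then the coefficient of $\eta_i$ in $c_1(X_i)$ should be $2/d_i$, not $d_i$); and the reduction from K\"ahler to projective is not a matter of surface positivity but of the validity of the standard conic bundle structure theory (Lemma~\ref{lemma:bettiMfs}), which the paper obtains by invoking Lin's algebraic approximation theorem.
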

 
In view of Theorem \ref{thm:MFS}, it is therefore natural to wonder if we can also bound the Chern numbers of certain threefolds of negative Kodaira dimension which are not necessarily Mori fibre spaces themselves. 
Our next result achieves this by generalising   
\cite[Corollary 1.5]{cascini-tasin} to the case of negative Kodaira dimension. 
To state it, recall that for any smooth complex projective threefold $X$, there is a cubic form $F_X$ on $H^2(X,\Q)$, given by cup product.
For technical reasons, we will assume that the discriminant $\Delta_{F_X}$ of the cubic form is non-zero.

\begin{theorem}\label{thm:uniruled}
	Let $X$ be a smooth complex projective threefold which is uniruled and let $F_X$ be its associated cubic. Assume that $\Delta_{F_X} \ne 0$ and that there exists a birational morphism $f\colon X\to Y$ onto a Mori fibre space $Y$, which is  obtained as a composition of  divisorial contractions to points and blow-downs to smooth curves in smooth loci. 
	
	Then there exists a constant $D$ depending only on the topology of the $6$-manifold underlying $X$ such that
	$$
	|K_X^3| \le D.
	$$
\end{theorem}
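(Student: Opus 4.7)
The plan is to factor $f\colon X\to Y$ as a tower of elementary blow-downs $X=X_n\to X_{n-1}\to\cdots\to X_0=Y$, where each $X_i\to X_{i-1}$ contracts a divisor to either a smooth point or a smooth curve lying in the smooth locus of $X_{i-1}$, and to track $K^3$ along this tower. Since every such blow-up raises $b_2$ by one, the number of steps is bounded by $b_2(X)-b_2(Y)\le b_2(X)$. The standard blow-up formulas give
\[
K_X^3 = K_Y^3 + 8p + \sum_{k=1}^{q}\bigl(-2\,K_{X_{i_k-1}}\cdot C_k + 2 - 2g(C_k)\bigr),
\]
where $p$ counts the point blow-ups and the sum runs over the $q$ curve blow-ups with centres $C_k\subset X_{i_k-1}$. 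So bounding $|K_X^3|$ reduces to bounding $|K_Y^3|$ and each intersection number $K_{X_{i_k-1}}\cdot C_k$ together with each genus $g(C_k)$ purely in terms of the topology of $X$.

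The bound on $|K_Y^3|$ will come from Theorem~\ref{thm:MFS}. Since smooth blow-downs at smooth centres preserve both smoothness and projectivity, $Y$ is a smooth projective (hence K\"ahler) Mori fibre space, and Theorem~\ref{thm:MFS} applies as soon as the homeomorphism type of $Y$ is forced to lie in finitely many classes by the topology of $X$. The key observation is that each elementary step alters $H^*_{tf}$, the cubic form, and the first Pontryagin class by an explicit formula depending only on the exceptional class $[E_i]\in H^2(X_i,\Z)$ and, in the curve case, on the normal-bundle data of $C_i$. Running these formulas in reverse along the tower, the topological invariants of $Y$ are reconstructed from those of $X$ together with the sequence $([E_1],\dots,[E_n])$. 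Here the hypothesis $\Delta_{F_X}\ne 0$ enters crucially: it forces the integral classes in $H^2(X,\Z)$ satisfying the cubic-form and Pontryagin relations characterising an exceptional divisor of a point or curve blow-up to form a finite set, so the sequences of $[E_i]$ realisable by a factorisation of the prescribed shape are finite in number. Consequently $Y$ ranges over finitely many homeomorphism types, and Theorem~\ref{thm:MFS} yields the desired bound on $|K_Y^3|$.

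The same cubic-form and Pontryagin data then determine the remaining unknowns in the formula for $K_X^3$: for each curve blow-up $f_i$, the ruled surface $E_i$ satisfies $E_i^3 = 2-2g(C_i)+K_{X_{i-1}}\cdot C_i$ and $f_i^*K_{X_{i-1}}\cdot E_i^2 = -K_{X_{i-1}}\cdot C_i$, while the Pontryagin class of $X_i$, via Hirzebruch--Riemann--Roch on $E_i$, recovers $g(C_i)$. Once the sequence $([E_i])$ is restricted to finitely many possibilities, all these quantities are bounded, and the displayed formula then bounds $|K_X^3|$ uniformly.

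The technical heart of the argument, and the main obstacle, is the cohomological rigidity step in the second paragraph: to show that $\Delta_{F_X}\ne 0$ alone is enough to confine the admissible exceptional sequences $([E_i])$ to finitely many configurations inside the topological invariants of $X$. This is the generalisation to Mori fibre space targets of the cubic-form combinatorics developed in \cite{cascini-tasin} for the non-negative Kodaira dimension case, and adapting it to the negative Kodaira dimension setting, where the cushion provided by Miyaoka--Yau is absent and must be replaced by Theorem~\ref{thm:MFS}, is where the essential new work lies.
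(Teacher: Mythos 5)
There is a genuine gap, and it sits at the foundation of your reduction: you treat every step of the factorisation $X = X_n \to \cdots \to X_0 = Y$ as a \emph{smooth} blow-down, asserting that ``smooth blow-downs at smooth centres preserve both smoothness and projectivity'' so that $Y$ is a smooth Mori fibre space to which Theorem~\ref{thm:MFS} applies. But the hypothesis of Theorem~\ref{thm:uniruled} only requires the steps over points to be \emph{divisorial contractions to points} --- not blow-ups of smooth points. Such contractions in general produce terminal $\Q$-factorial singularities on the target, so the intermediate varieties and $Y$ itself need not be smooth. This breaks your argument in two places. First, your formula $K_X^3 = K_Y^3 + 8p + \sum(\cdots)$ is only valid for blow-ups of smooth points; for a general divisorial contraction to a point the correction term is $c^3E^3$ with $c$ the discrepancy, and bounding it requires the cubic-form analysis of \cite{cascini-tasin} (which is what the paper invokes to control $|K_X^3 - K_Y^3|$). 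Second, and more seriously, Theorem~\ref{thm:MFS} is stated for \emph{smooth} K\"ahler threefolds and cannot be applied to a possibly singular $Y$. The paper has to prove a separate singular analogue (Proposition~\ref{prop:MFS:singular}) for terminal $\Q$-factorial conic bundles, which replaces ordinary Riemann--Roch by Reid's singular Riemann--Roch and therefore needs an additional input absent from your outline: a bound on the basket invariant $\Xi(Y)$, obtained from \cite[Prop.~3.3]{CZ14}, together with the boundedness of $\chi(Y,\mathcal O_Y) = \chi(X,\mathcal O_X)$.

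A secondary issue: your route to $|K_Y^3|$ goes through confining $Y$ to finitely many \emph{homeomorphism types}, but what the finiteness of exceptional classes actually delivers (and what Proposition~\ref{prop:triple} proves) is finiteness of the \emph{invariant triple} $(H^2_{tf}, F, p_1)$ of $Y$; for a singular $Y$ there is no classification converting this into a homeomorphism statement, which is another reason the paper feeds the triple directly into Proposition~\ref{prop:MFS:singular} rather than into Theorem~\ref{thm:MFS}. You also do not address why $Y$ cannot be a Mori fibre space over a curve; the paper excludes this using $\Delta_{F_Y} \ne 0$ (a del~Pezzo fibration would force an isotropic class in $H^2$, hence a singular point of the cubic). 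Your overall skeleton --- bound $|K_X^3 - K_Y^3|$ step by step via the cubic form, then bound $|K_Y^3|$ for the Mori fibre space --- does match the paper's, but the singular case is not a technicality that can be waved away: it is the reason Proposition~\ref{prop:MFS:singular} exists.
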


A major step in proving Theorem \ref{thm:uniruled} is Proposition \ref{prop:triple} (cf.\ \cite[Theorem 1.3(2)]{cascini-tasin}), where we show that in the assumptions of Theorem \ref{thm:uniruled}, most of the topological invariants of $Y$ are determined (up to finite ambiguity) a priori by the smooth manifold underlying $X$. It would be interesting to understand to what extend this is true in general (see \cite{Chen} for the case of Betti numbers):

\begin{question}\label{q:topMMP}
Let $X$ be a smooth complex projective threefold with cubic form $F_X$ and first Pontryagin class $p_1(X)$.
Let $\mathcal P$ be the set of pairs $(F_Y,p_1(Y))$, taken up to isomorphism, such that there exists an MMP $X \dashrightarrow Y$. 
Is the set $\mathcal P$  determined by the pair of invariants $(F_X,p_1(X))$ of $X$ up to finite ambiguity?
\end{question}

\subsection{Conventions}  
All manifolds are closed and connected.
A K\"ahler manifold is a complex manifold which admits a K\"ahler metric. 
For any (K\"ahler) manifold $X$, we denote by $H^\ast_{tf}(X,\Z)$ the quotient $H^\ast(X,\Z)/H^\ast(X,\Z)_{tors}$, where $ H^\ast(X,\Z)_{tors}$ denotes the torsion subgroup of $H^\ast(X,\Z)$. 

\section{Mori fibre spaces over surfaces}

The starting point of our investigation is the following lemma.

\begin{lemma}\label{lemma:bettiMfs}(\cite[Sec. 7.1]{Fano})
	Let $f: X \to S$ be a  Mori fibre space such that $X$ is a smooth projective threefold and $S$ is a surface. 
	Then  
	\begin{enumerate}[(i)]
		\item $f: X \to S$ is a standard (i.e.\ relative Picard number 1) conic bundle and $S$ is smooth;
		\item the discriminant $D \subset S$  of $f$ is either empty or a reduced curve with at worst ordinary double points;
		\item $e(X)= 2(e(S)-p_a(D)+1)$, $b_1(X)=b_1(S)$ and $b_3(X)=2(b_1(X)+p_a(D)-1)$;
		\item $D \equiv -f_*K^2_{X/S}$ and $-4K_S \equiv f_*K_X^2 +D$. 
	\end{enumerate}
\end{lemma}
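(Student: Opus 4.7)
The plan is to follow the classical structure theory of standard conic bundles over surfaces, which is exactly the content of the cited section in \cite{Fano}. The argument proceeds locally on $S$ and then globalises by pushforward computations. For part (i), since $f$ is a Mori fibre space whose general fibre is a Fano curve, that general fibre is $\PP^1$; combined with the relative Picard rank being $1$, a standard argument embeds $X$ into a $\PP^2$-bundle over $S$ as the zero locus of a section of a rank-three symmetric bundle, so $f$ is a conic bundle. The smoothness of $S$ is the crucial geometric point: using the classification of extremal contractions from a smooth threefold and the fact that the relative Picard rank is $1$, one excludes non-$\Q$-factorial or singular base points.

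For part (ii), I would analyse the local equation of the conic bundle near a point $s\in S$, writing each fibre as a conic $q(x,y,z)=0$ in $\PP^2$. The discriminant is cut out by $\det q = 0$. Using smoothness of $X$, one shows that the generic degenerate fibre is a pair of lines meeting transversally (over which $D$ is smooth), while the worst degenerations are double lines, occurring exactly over $\Sing D$. A direct local coordinate computation then forces those singular points of $D$ to be ordinary nodes, for otherwise $X$ itself would fail to be smooth at the corresponding double-line fibre.

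For part (iii), the Euler characteristic identity follows from additivity of $e(-)$ applied to the stratification $S = (S\setminus D) \sqcup (D\setminus \Sing D) \sqcup \Sing D$: the three fibre types have Euler characteristics $2,3,2$, and combining this with $e(D) = 2 - 2p_a(D) + \#\Sing D$ yields $e(X) = 2(e(S)-p_a(D)+1)$. The equality $b_1(X)=b_1(S)$ follows because $f$ has rationally connected fibres, so $f_\ast\colon \pi_1(X)\to \pi_1(S)$ is an isomorphism. Combining $b_2(X) = b_2(S) + 1$ (from the Leray spectral sequence for $f$ and the relative Picard rank) with Poincaré duality $e(X) = 2 - 2b_1(X) + 2b_2(X) - b_3(X)$ then yields the stated formula for $b_3$. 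For part (iv), the identities are standard intersection-theoretic computations on conic bundles: the class $-f_*K_{X/S}^2$ records the singular-fibre locus with multiplicities, which coincides with $D$ since $D$ is reduced by (ii), and $-4K_S \equiv f_*K_X^2 + D$ then follows from $K_X = f^*K_S + K_{X/S}$, the projection formula, and the degree $-2$ of $K_{X/S}$ on a general fibre.

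The main obstacle is part (ii): the careful local analysis of the discriminant requires tracking smoothness of $X$ in explicit local coordinates on the ambient $\PP^2$-bundle and ruling out worse singularities of $D$ than ordinary nodes. Once that geometric picture is in place, parts (i), (iii) and (iv) become relatively formal consequences of general fibration and intersection-theoretic machinery.
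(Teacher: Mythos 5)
The paper does not prove this lemma itself but simply cites \cite[Sec.\ 7.1]{Fano}, and your sketch is a faithful outline of exactly the standard argument contained there (smoothness of the base via Mori--Cutkosky's classification of extremal contractions, the local discriminant analysis, stratification of $S$ by fibre type with Euler characteristics $2,3,2$, and the pushforward computation from $K_X=f^*K_S+K_{X/S}$). Your approach is correct and coincides with the cited one.
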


We will also use the following lemma.

\begin{lemma} \label{lem:f_*p1}
	Let $f: X \to S$ be a  Mori fibre space such that $X$ is a smooth projective threefold and $S$ is a surface. 
	Then,
	$$
	f_\ast p_1(X)\equiv -3D ,
	$$
	where $p_1(X)$ is the first Pontryagin class of $X$ and  $D\subset S$ denotes the discriminant curve of $f$.
\end{lemma}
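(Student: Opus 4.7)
The plan is to split $p_1(X) = c_1(X)^2 - 2c_2(X) = K_X^2 - 2 c_2(X)$ and compute $f_\ast K_X^2$ and $f_\ast c_2(X)$ separately. The first pushforward is handed to us by Lemma \ref{lemma:bettiMfs}(iv), which asserts $f_\ast K_X^2 \equiv -4K_S - D$, so the real task is to produce a comparable formula for $f_\ast c_2(X)$. My approach is to apply Grothendieck--Riemann--Roch to the structure sheaf $\mathcal{O}_X$.

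To set this up, one needs $Rf_\ast \mathcal{O}_X = \mathcal{O}_S$. Every fibre of the standard conic bundle $f$ is a plane conic, namely $\PP^1$, a reducible pair of lines meeting at a point, or a double line, and in each case the arithmetic genus is zero. Combined with the normality of $S$, cohomology-and-base-change then gives $f_\ast \mathcal{O}_X = \mathcal{O}_S$ and $R^i f_\ast \mathcal{O}_X = 0$ for $i>0$. Consequently $\mathrm{ch}(Rf_\ast \mathcal{O}_X) = 1$, and by the projection formula, Grothendieck--Riemann--Roch collapses to
$$
\mathrm{td}(S) = f_\ast \mathrm{td}(X).
$$
Matching the $H^2(S)$-components of both sides yields
$$
-\tfrac{1}{2}K_S = \tfrac{1}{12}\, f_\ast\bigl(K_X^2 + c_2(X)\bigr),
$$
and feeding in Lemma \ref{lemma:bettiMfs}(iv) once more gives $f_\ast c_2(X) \equiv -6K_S - f_\ast K_X^2 \equiv -2K_S + D$.

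Putting the two computations together,
$$
f_\ast p_1(X) = f_\ast K_X^2 - 2 f_\ast c_2(X) \equiv (-4K_S - D) - 2(-2K_S + D) = -3D,
$$
which is the claim. The only non-formal step is verifying $Rf_\ast \mathcal{O}_X = \mathcal{O}_S$, in particular dealing with the non-reduced double-line fibres; once that is in hand, the rest of the argument is simply a matter of extracting graded pieces of the Todd classes on $X$ and $S$.
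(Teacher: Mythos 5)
Your argument is correct, but it takes a genuinely different route from the paper. The paper also reduces to computing $f_\ast c_2(X)$ (it likewise takes $f_\ast K_X^2\equiv -4K_S-D$ from Lemma \ref{lemma:bettiMfs}), but it does so by intersecting with a general very ample curve $C\subset S$: the preimage $R=f^{-1}(C)$ is the blow-up of a minimal ruled surface over $C$ in $C.D$ points, and the normal bundle sequence for $R\subset X$ gives $f_\ast c_2(X).C=-2K_S.C+C.D$ directly, with the discriminant entering geometrically as the number of blown-up points. Your Grothendieck--Riemann--Roch computation packages the same information functorially: since all fibres are conics of arithmetic genus zero and $f$ is flat (so $Rf_\ast\mathcal O_X=\mathcal O_S$, e.g.\ by Stein factorization plus cohomology and base change, or by relative Kawamata--Viehweg vanishing as $-K_X$ is $f$-ample), one gets $f_\ast\bigl(K_X^2+c_2(X)\bigr)=-6K_S$, and the discriminant then enters only through Lemma \ref{lemma:bettiMfs}(iv). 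Your intermediate result $f_\ast c_2(X)\equiv -2K_S+D$ agrees with the paper's, and the final arithmetic is the same. What your approach buys is that it avoids identifying the surface $R$ and works uniformly without choosing a curve; what it gives up is that GRR only determines the combination $f_\ast(K_X^2+c_2)$, so the appearance of $D$ is entirely inherited from Lemma \ref{lemma:bettiMfs}(iv) rather than seen independently. The one non-formal step you flag, $Rf_\ast\mathcal O_X=\mathcal O_S$ including the double-line fibres, is indeed fine: flatness follows from miracle flatness ($X$, $S$ smooth with equidimensional fibres), $\chi(\mathcal O_{X_s})=1$ is constant, and $h^0(\mathcal O_{X_s})=1$ even for the non-reduced conic, as the sequence $0\to\mathcal O_{\PP^1}(-1)\to\mathcal O_{2L}\to\mathcal O_{\PP^1}\to 0$ shows.
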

\begin{proof}
	Since $S$ is smooth projective by Lemma \ref{lemma:bettiMfs}, the N\'eron--Severi group of $S$ is generated by very ample curves.
	Hence, it suffices to compute the intersection product with a general smooth projective curve $C\subset S$.
	The preimage $R:=f^{-1}(C)$ is then the blow-up of a minimal ruled surface over $C$ in $C.D$ many points.
	The normal bundle of $R$ in $X$ is given by $\mathcal N_{R/X}=f^\ast \mathcal O_S(C)|_R$.
	Since $T_X|_R=T_R\oplus \mathcal N_{R/X}$, we get
	$$
	(1+c_1(X)|_R+c_2(X)|_R)(1-c_1(\mathcal N_{R/X})+c_1^2(\mathcal N_{R/X}))=1+c_1(R)+c_2(R).
	$$
	Hence, using $\mathcal O_X(R)=f^\ast \mathcal O_S(C)$, we get
	\begin{align*}
	f_\ast c_2(X).C&=c_2(X).R\\
	&=c_2(X)|_R\\
	&=c_2(R)-c_1^2(\mathcal N_{R/X})+c_{1}(\mathcal N_{R/X})c_1(X)|_R\\
	&=c_2(R)-f^\ast \mathcal O_S(C)^3+f^\ast \mathcal O_S(C)^2c_1(X) \\
	&=2-4g(C)+2+C.D+C^2\cdot c_1(\CP^1)\\
	&=-2K_S.C-2C^2+C.D+2C^2\\
	&=-2K_S.C+C.D.
	\end{align*}
	By Lemma \ref{lemma:bettiMfs}, $f_\ast c_1^2(X)\equiv -4K_S-D$.
	Using $p_1=c_1^2-2c_2$, we get
	\begin{align*}
	{f_\ast} p_1(X).C&= f_\ast  c_1^2(X).C-2{f_\ast} c_2(X).C\\
	&=-4K_S.C-D.C+4K_S.C-2D.C\\
	&=-3 D.C,
	\end{align*}
	which proves the lemma.
\end{proof}

\begin{proof}[Proof of Theorem \ref{thm:MFS:surfaces}]
By \cite[Theorem 1.1]{Lin}, any standard conic bundle $f: X \to S$, where $X$ is a smooth K\"ahler threefold, has an algebraic deformation. 
To bound $K_{X_i}^3$ it thus suffices to assume that $X_i$ and $S_i$ are projective for any $i$.

By assumptions, there is an isomorphism $H^2_{tf}(X_i,\Z)\cong H^2_{tf}(X_0,\Z)$ which respects the trilinear forms given by cup products.
We use this isomorphism to identify degree two cohomology classes of $X_i$ with those of $X_0$ (up to torsion).
Using Poincar\'e duality, we further identify classes of $H^4_{tf}(X_i,\Z)$ with linear forms on $H^2_{tf}(X_i,\Z)\cong H^2_{tf}(X_0,\Z)$. 

The codimension one linear subspace ${f_i}^*\PP(H^2(S_i,\Q))$ of $\CP(H^2(X_0,\Q))$ is contained in the cubic hypersurface $\{\alpha\mid \alpha^3=0\}$. 
Passing to a suitable subsequence we can therefore assume that 
$$
f_i^*H^2(S_i,\Q)\subset  H^2(X_0,\Q)
$$ 
does not depend on $i$. 
Let $\ell_i \in H^4_{tf}(X_0,\Z)$ be the class of a fibre of $f_i$.
The linear form determined by this class on $H^2(X_0,\Q)$ has kernel $f_i^*H^2(S_i,\Q)$, and so $\ell_i\cdot \Q$ is independent of $i$.
Since $\ell_i$ is an integral class with $K_{X_i}.\ell_i=-2$, we may after possibly passing to another subsequence assume that $\ell_i=\ell$ does not depend on $i$.  

Since the natural cup product pairing on $H^2(S_i,\Q)$ can be recovered from the pairing
$$
f_i^\ast H^2(S_i,\Q)\times f_i^\ast H^2(S_i,\Q)\longrightarrow \ell\Q,
$$
we get that the pairing on $H^2(S_i,\Q)$ is determined by the cubic form on $H^2(X_0,\Q)$ and so it does not depend on $i$.

Since $f_i^*H^2(S_i,\Q)\subset  H^2(X_0,\Q)$ does not depend on $i$, the same holds for  the homomorphism
$$
\psi_i:H^2(S_i,\Q)\longrightarrow \Q,\ \ \alpha \mapsto p_1(X_i).f_i^\ast \alpha %=f_\ast p_1(X).\alpha .
$$
By the projection formula, we have $p_1(X_i).f_i^\ast \alpha=(f_i)_\ast p_1(X_i).\alpha$.
Lemma \ref{lem:f_*p1} thus yields
$$
\psi_i(\alpha)=-3D_i.\alpha,
$$
where $D_i$ is the discriminant curve of $f_i$.
This shows that the linear form determined by $[D_i]\in H^2(S_i,\Q)$ on $H^2(S_i,\Q)$ does not depend on $i$.
Since the natural pairing $H^2(S_i,\Q)\times H^2(S_i,\Q)\to \Q$ is perfect by Poincar\'e duality, we get that the class $[D_i]\in H^2(S_i,\Q)$ does not depend on $i$.
Using again the fact that we know the pairing on $H^2(S_i,\Q)$, we finally get that the self-intersection $D_i^2$ does not depend on $i$.

For any class $y\in H^2(X_0,\Q)$, which does not lie in ${f_i}^*H^2(S_i,\Q)$, we have
$$
H^2(X_0, \Q)={f_i}^*H^2(S_i,\Q) \oplus y\cdot \Q \ \ \text{and}\ \ H^4(X_0, \Q)={f_i}^*H^2(S_i,\Q)\cdot y \oplus \ell\cdot \Q .
$$
In particular, $y^2=u y + \lambda \ell$ for some $\lambda \in \Q$ and $u \in {f_i}^*H^2(S_i,\Q)$.
Replacing $y$ by a suitable multiple of $y-\frac{1}{2}u$, we may thus assume that 
$$
y.\ell=-2\ \ \text{and}\ \ y^2 \in {f_i}^*H^4(S_i,\Q)=\ell\cdot \Q .
$$

For any $X_i$, we then get
$$
K_{X_i}=y+f_i^\ast z_i
$$
for some $z_i\in H^2(S_i,\Z)$.
Since
$$
K_{X_i}^3=y^3-6z_i^2,
$$
it suffices to prove the boundedness of $z_i^2$.

Since $y\cdot \ell=-2$, the pushforward of $2yf_i^\ast z_i$ via $f_i$ yields $-4z_i$.
Lemma \ref{lemma:bettiMfs} therefore implies that
$$
(f_i)_\ast K_{X_i}^2\equiv -4z_i\equiv -4 K_{S_i}-D_i.
$$
Hence,
$$
16z_i^2=16 K_{S_i}^2+8 K_{S_i}D_i+ D_i^2.
$$
Since $D_i^2$ does not depend on $i$ and $K_{S_i}^2$ is bounded in terms of the Betti numbers of $S_i$, the statement follows from the fact that $p_a(D_i)$ is bounded by Lemma \ref{lemma:bettiMfs}.
\end{proof}

\section{Uniruled threefolds}

Before we turn to the proof of Theorem \ref{thm:uniruled}, we state few preliminary facts about terminal $\Q$-factorial threefolds.

\subsection{Invariant triples}

Let $X$ be a terminal $\Q$-factorial threefold. 

There exists a well-defined class $c_2(X) \in H^2(X, \Z)^{\vee}=\mathrm{Hom}(H^2(Z,\Z),\Z)$ obtained in the following way (see page 411 in \cite{Reid87}). 
For any $\alpha \in H^2(X,\Z)$ set
$$
c_2(X).\alpha= c_2(\tilde X). f^*\alpha,
$$
where $f: \tilde{X} \to X$ is a resolution of $X$.   

We then define the Pontryagin class $p_1(X) \in  H^2(X, \Q)^{\vee}$ in terms of $c_1(X)$ and $c_2(X)$ in the same way as in the smooth case, where $c_1(X)$ is the class of  $-K_X$ in $H^2(X,\Q)$:

$$
p_1(X):= c_1(X)^2 -2c_2(X).
$$

We also associate to $X$ its cubic form $F_X \in S^3 H^2(X,\Z)^{\vee}$, which is induced by the cup product on $H^2(X,\Z)$.
In this way we can associate to $X$ the triple $(H_{t.f.}^2(X,\Z), F_X, p_1(X))$.   
When $X$ is smooth, this triple encodes many geometrical properties of the 6-manifold underlying $X$ (see for instance \cite{OV95} and \cite{BCT}).

\begin{definition}
	We call $(H_{t.f.}^2(X,\Z), F_X, p_1(X))$ the \emph{invariant triple} of $X$.  
	Two  triples $(H,F,p)$ and $(H',F',p')$, where $H$ (resp.\ $H'$) is a free abelian group,  $F \in S^3H^{\vee}$  (resp.\  $F \in S^3H'^{\vee}$) is a cubic form and $p$ is a linear form on $H\otimes \Q$ (resp.\ $p'$ is a linear form on  $H'\otimes \Q$) are isomorphic if there exists a linear isomorphism $T: H \to H'$ which identifies $F$ with $F'$ and its $\Q$-extension identifies $p$ with $p'$.
\end{definition}

\subsection{Terminal singularities}
We now recall few known facts about terminal singularities in dimension three.

Let $(X,p)$ be the germ of a three-dimensional terminal singularity. 
The {\em index} of $p$ is the smallest positive integer $r$ such that $rK_X$ is Cartier. 
It follows from the classification of terminal singularities, that there exists a deformation of $(X,p)$ into a space with $h\ge 1$ terminal singularities $p_1,\dots,p_h$ which are isolated cyclic quotient singularities of index $r(p_i)$  (for details see \cite[Remark 6.4]{Reid87}). 
The set $\{p_1,\dots,p_h\}$ is called the {\em basket} $\mathcal B(X,p)$ of singularities of $X$ at $p$. %The number $h$ is called the {\em axial weight} $\aw(X,p)$ of $(X,p)$. 
As in \cite[Section 2.1]{ChenHacon11}, we define 
$$
\Xi(X,p)=\sum_{i=1}^h  r(p_i).
$$
Thus, if $X$ is a projective variety of dimension $3$ with terminal singularities and $\Sing X$ denotes the finite set of singular points of $X$, we may define
$$
\Xi(X)=\sum_{p\in\Sing X} \Xi(X,p).
$$

\subsection{Proof of Theorem \ref{thm:uniruled}}

The following result is interesting by itself and leads naturally to the problem of understanding what kind of topological invariants are determined up to finite ambiguity during a running of an MMP, see Question \ref{q:topMMP}. 
\begin{proposition}{[Cf.\ \cite[Theorem 1.3(2)]{cascini-tasin}]}\label{prop:triple}
	Let $H$ be a finitely generated free abelian group of rank $n+1$, $F \in S^3H^{\vee}$ be a cubic form such that $\Delta_F \ne 0$ and $p$ a linear form on $H$.  Consider the set $\mathcal{P}$ of invariant triples $(H',F',p')$ up to isomorphism, such that there exist
	\begin{enumerate}
		\item a terminal $\Q$-factorial threefold $X$ with associated triple $(H,F,p)$;
		\item a terminal $\Q$-factorial threefold $Y$ with associated triple $(H',F',p')$; 
		\item  a birational morphism $f\colon X\to Y$ which is a divisorial contraction to a point or to a  smooth curve contained  in the smooth locus of $Y$. 
	\end{enumerate}
	Then the set  $\mathcal{P}$ is finite. 
\end{proposition}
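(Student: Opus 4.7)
The plan is to show (i) that the class $[E]\in H^2_{tf}(X,\Z)$ of the exceptional divisor of $f$ lies in a finite subset of $H$ depending only on $(H,F,p)$, and (ii) that given $[E]$, the triple $(H',F',p')$ of $Y$ is determined up to a further finite ambiguity by $(H,F,p)$ together with a few bounded integer invariants of the contraction.

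For step (ii), the pullback $f^*\colon H^2(Y,\Q)\hookrightarrow H^2(X,\Q)$ is injective with image of codimension one, and the projection formula gives $F_X([E],f^*\alpha,f^*\beta)=0$ for all $\alpha,\beta\in H^2(Y,\Q)$. Hence $f^*H^2(Y,\Q)$ lies in the radical of the symmetric bilinear form $B_E(\alpha,\beta):=F_X([E],\alpha,\beta)$ on $H^2(X,\Q)$. The hypothesis $\Delta_F\neq 0$ ensures that no nonzero $v\in H\otimes \Q$ satisfies $F(v,v,w)=0$ for all $w$ (such a $v$ would be a singular point of the smooth cubic $\{F=0\}\subset\PP(H\otimes\Q)$), so in particular $B_E$ has rank at least one; combined with the codimension bound, $B_E$ has rank exactly one and $f^*H^2(Y,\Q)=\operatorname{rad}(B_E)$ is intrinsically recovered from $F$ and $[E]$. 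Consequently $F'$ is the restriction of $F_X$ to this subspace. The linear form $p'$ is then reconstructed from the standard comparison formulas for $c_2$ under each allowed type of divisorial contraction: for the blowup of a smooth curve of genus $g$ one uses $K_X=f^*K_Y+E$ together with the blowup formula for $c_2$; for a divisorial contraction to a point, including to a terminal singular point of $Y$, one uses the analogous comparison, where the basket $\Xi(Y,p)$ contributes. In each case only finitely many integer parameters (genus, degree $K_Y\cdot C$, basket data) enter, and these are bounded in terms of intersection numbers of $[E]$ under $F_X$.

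The main obstacle is step (i), the finiteness of the set of possible classes $[E]$. My plan here is to follow the strategy of \cite[Theorem 1.3(2)]{cascini-tasin}, which establishes the analogous statement in the non-negative Kodaira dimension setting. The two key ingredients are: first, the classification of three-dimensional terminal divisorial contractions bounds the discrete invariants of the contraction (e.g.\ $[E]^3$, $K_X\cdot [E]^2$, the basket data at a contracted point) in terms of each other and of topological invariants of $X$; second, the non-vanishing of $\Delta_F$ yields a Northcott-type finiteness on the lattice $H$, namely that integral classes $\alpha\in H$ satisfying a uniformly bounded system of cubic-form constraints form a finite set. Applying this finiteness statement to $[E]$ gives the required bound. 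Combining (i) and (ii), the set $\mathcal{P}$ is finite. The subtlest point I anticipate is controlling the case of a divisorial contraction to a non-Gorenstein terminal point of $Y$, where the admissible values of $[E]^3$ and the basket contributions to $p_1(Y)$ must both be bounded purely in terms of $(H,F,p)$.
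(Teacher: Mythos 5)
Your step (ii) contains a genuine error in the curve case. From $F([E],f^*\alpha,f^*\beta)=0$ you conclude that $f^*H^2(Y,\Q)$ lies in the radical of $B_E:=F([E],\cdot,\cdot)$; but that identity only says the subspace is \emph{totally isotropic} for $B_E$. Membership in the radical requires $F([E],f^*\alpha,w)=0$ for \emph{all} $w\in H^2(X,\Q)$, in particular for $w=[E]$, i.e.\ $E^2.f^*\alpha=0$. This holds for a contraction to a point (where $f_*(E^2)=0$), but fails for a contraction to a curve $C$, where $E^2.f^*\alpha=-C.\alpha\neq 0$ in general. So in the curve case $\operatorname{rad}(B_E)$ does not recover $f^*H^2(Y,\Q)$; in fact that subspace is \emph{not} uniquely determined by $[E]$ and $F$ --- one only knows it is a complement of $\Z[E]$ on which $B_E$ vanishes (equivalently, $F$ is in ``reduced form'' with respect to the splitting $H=\Z[E]\oplus H'$), and there may be many such complements. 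The paper's proof handles exactly this by showing the set $\mathcal E$ of such pairs $([E],H')$ is finite, combining the finiteness of inequivalent reduced forms (\cite[Thm.\ 3.1]{cascini-tasin}) with the finiteness of the isotropy group of a cubic with non-zero discriminant (\cite[Thm.\ 4]{OV95}). That isotropy-group argument is where the hypothesis $\Delta_F\neq 0$ actually enters, and it is absent from your proposal.

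Your step (i) is also not correct as stated: it is false in general that integral classes in $H$ satisfying a uniformly bounded system of cubic-form constraints form a finite set (a smooth cubic can contain infinitely many integral points with, say, $\alpha^3=0$), so the ``Northcott-type finiteness'' you invoke is not the operative mechanism. For contractions to points the finiteness of the possible $[E]$ comes from the specific fact that $[E]$ is a rank-one point of the Hessian of $F$ (here $f^*H^2(Y,\Q)$, of codimension one, genuinely lies in the radical of $B_E$), and primitive rank-one points are finite by \cite[Prop.\ 3.3]{cascini-tasin} --- note this part needs no discriminant hypothesis; for contractions to curves it is subsumed in the finiteness of $\mathcal E$ above. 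Finally, even in the point case, identifying the radical only recovers the rational subspace $f^*H^2(Y,\Q)$, whereas the invariant triple involves the integral lattice and integral cubic $F'$; one must additionally bound the index of $f^*H^2_{tf}(Y,\Z)\oplus\Z[E]$ in $H$, which the paper does via \cite[Prop.\ 4.7]{cascini-tasin} (index at most $r^n$ with $r=|[E]^3|$). Your reconstruction of $p'$ is essentially in line with the paper's once these points are repaired.
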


\begin{proof}

	Note that the proof of this case works also for $\Delta_F=0$. 
	 Consider the set $\mathcal A$ of primitive elements $\alpha \in H$ such that $\alpha$ is proportional to the exceptional divisor $E$ of some divisorial contraction to a point $f:X \to Y$ as in the statement. 
	 The elements of $\mathcal A$ are points of rank 1 for the Hessian of the cubic form $F$ and so they are finite by \cite[Proposition 3.3]{cascini-tasin}.
	It follows from \cite[Proposition 4.7]{cascini-tasin} that for any sub-module $H'=f^*H^2_{t.f.}(Y,\Z) \hookrightarrow H$ there is $\alpha \in \mathcal A$ such that $\alpha^2.H'=0$ and such that the index of $H' \oplus \Z \alpha$ in $H$ is at most $r^n$, where $r=|\alpha^3|$.
	This implies that for all possible contractions to points $f\colon X \to Y$ as in the statement, the inclusion $f^*H^2_{t.f.}(Y,\Z) \hookrightarrow H^2_{t.f.}(X,\Z)$ is determined up to finite ambiguity.  This determines also $F'$ up to finite ambiguity just restricting $F$ to $H'$.
	
	To prove the finiteness of $p'$ consider a divisorial contraction to a point $f: X \to Y$ and write
	$$
	c_1(X)=f^*c_1(Y) -cE,
	$$
	where $c$ is the discrepancy of the exceptional divisor $E$. 
	Since $c_2(X)=f^*c_2(Y)$ we have that
	$$
	p_1(X)=f^*p_1(Y) -2cf^*c_1(Y).E + c^2E^2 
	$$
	and so  $p_1(Y)$ is given by the restriction of $p_1(X)$ to $f^*H^2_{t.f.}(Y,\Q)$. This means that also  $p'$ is determined up to finite ambiguity and we are done. 
	
	\smallskip
	
	We now look at divisorial contractions to curves. 
	Consider $\mathcal E$ the set of pairs $(E,H')$ where $E$ is a primitive element in $H$ and $H' \subset H$ is a submodule such that
	$$
	H=\Z[E] \oplus H'
	$$
	and the cubic $F$ assumes the form 
	\begin{align} \label{reduced form}
		F=ax_0^3 + \sum_{i=1}^n b_ix_0^2x_i + F'(x_1,\ldots,x_n) 
	\end{align}
	with respect to any basis $E, \alpha_1, \ldots, \alpha_n$ with $\alpha_1, \ldots, \alpha_n \in H'$.
	
	By \cite[Thm. 3.1]{cascini-tasin} there are only finitely many possible non-equivalent reduced forms for $F$. 
	In particular, up to finite ambiguity, we can assume that the coefficients of $F$ in the expression (\ref{reduced form}) are fixed.
	Since the isotropy group of a cubic with non-zero discriminant is finite (\cite[Thm. 4]{OV95}), we deduce that $\mathcal E$ is finite.   
	
	If $f\colon X\to Y$ is a divisorial contraction which contracts a divisor $E$ to a smooth curve $C$ in the smooth locus of $Y$, then (see \cite[Proposition 14]{OV95} and \cite[Proposition 4.8]{cascini-tasin})
	$$
	H^2(X,\Z)= \Z[E] \oplus f^*H^2(Y,\Z)
	$$
	and
	$$
	p_1(X)=f^*(p_1(Y))  + E^2 -2f^*(C).
	$$
	
	Recalling that $E^2.f^*(\alpha)=-C.\alpha$ for any $\alpha \in H^2(Y,\Z)$ we deduce that $p_1(Y)$ is determined by
	 $p_1(X)$,  $E^2$ and by the inclusion $f^*H(Y,\Z) \hookrightarrow H^2(X,\Z)$ and we conclude using the finiteness of $\mathcal E$.  
\end{proof}

\begin{proposition} \label{prop:MFS:singular}
	Let $(X_i)_{i\geq 0}$ be a sequence of terminal $\Q$-factorial threefolds admitting a conic bundle structure $f_i:X_i\longrightarrow S_i$ of relative Picard number 1 over a surface $S_i$. 
	Assume that 
	\begin{enumerate}
	%	\item the Betti numbers of $X_i$ are bounded and $b_2(X_i)=2$;
	\item the Euler characteristics $\chi(X_i,\mathcal O_{X_i})$ are bounded and $b_2(X_i)=2$;
		\item the invariant triples of $X_0$ and $X_i$ are isomorphic for any $i$;
		\item the sequence $\Xi(X_i)$ is bounded.
	\end{enumerate}
	Then the sequence of Chern numbers $c_1^3(X_i)$ is bounded. 
\end{proposition}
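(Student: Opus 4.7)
The plan is to follow the proof of Theorem \ref{thm:MFS:surfaces}, supplying two new ingredients: singular analogues of Lemmas \ref{lemma:bettiMfs} and \ref{lem:f_*p1} in the terminal $\Q$-factorial setting, and bounds on the Betti and holomorphic Euler-characteristic invariants of $S_i$ derived from the hypothesis $b_2(X_i)=2$ together with the boundedness of $\chi(X_i,\OO_{X_i})$ and $\Xi(X_i)$. Throughout I identify $H^2_{tf}(X_i,\Q)$ with a fixed two-dimensional $\Q$-vector space $H$ carrying the common cubic $F$ and linear form $p_1$ supplied by the invariant-triple isomorphism. Since $f_i$ has relative Picard number one and $\dim H=2$, the subspace $V_i:=f_i^\ast H^2(S_i,\Q)\subset H$ is a line contained in the cubic locus $\{F(\alpha,\alpha,\alpha)=0\}$. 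A homogeneous cubic in two variables has at most three projective roots, so after passing to a subsequence $V_i=V$ is a fixed line. The fibre class $\ell_i\in H^4_{tf}(X_0,\Q)$, characterised up to scalar as the dual of $V$, is then pinned down, and the condition $K_{X_i}\cdot\ell_i=-2$ together with integrality lets us assume, after a further subsequence, that $\ell_i=\ell$ is fixed; the pairing induced on $V\cong H^2(S_i,\Q)$ via $\ell$ is then also independent of $i$.

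Next I establish the singular analogues of Lemmas \ref{lemma:bettiMfs}(iv) and \ref{lem:f_*p1}. Terminal threefold singularities are isolated, so any very ample curve $C\subset S_i$ avoiding the finite set $f_i(\Sing(X_i))$ has smooth preimage $R:=f_i^{-1}(C)$ lying in the smooth locus of $X_i$; the adjunction computation of Lemma \ref{lem:f_*p1} on $R$ then goes through verbatim and yields $(f_i)_\ast p_1(X_i)\equiv -3D_i$ and $(f_i)_\ast c_1^2(X_i)\equiv -4K_{S_i}-D_i$, where $D_i$ is the discriminant curve. Pairing the first identity with a generator $H_i$ of $\NS(S_i)_\Q$ and using the projection formula, $D_i\cdot H_i$ is determined (up to finite ambiguity) by the invariant triple, and hence so are the class $[D_i]\in H^2(S_i,\Q)$ and the number $D_i^2$.

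Fix $y\in H$ satisfying $y\cdot\ell=-2$ and $y^2\in\ell\cdot\Q$, and write $K_{X_i}=y+f_i^\ast z_i$ with $z_i\in H^2(S_i,\Q)$. The computation in the proof of Theorem \ref{thm:MFS:surfaces} gives $K_{X_i}^3=y^3-6z_i^2$ and $16z_i^2=16K_{S_i}^2+8K_{S_i}\cdot D_i+D_i^2$, so it suffices to bound $K_{S_i}^2$ and $K_{S_i}\cdot D_i$. For $K_{S_i}^2$: the rigidity $\rho(X_i)\leq b_2(X_i)=2$, combined with $b_2=2h^{0,2}+h^{1,1}$ and $h^{1,1}\geq\rho(X_i)=2$, forces $h^{0,2}(X_i)=0$; the uniruledness of $X_i$ gives $h^{0,3}(X_i)=0$; hence $\chi(X_i,\OO_{X_i})=1-h^{0,1}(X_i)$, so $h^{0,1}(X_i)$ and therefore $b_1(X_i)=b_1(S_i)$ are bounded (using Leray, $Rf_{i\ast}\OO_{X_i}=\OO_{S_i}$). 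Noether's formula on a resolution of $S_i$, together with $\chi(\OO_{S_i})=\chi(\OO_{X_i})$ and $e(S_i)=3-2b_1(S_i)$, then bounds $K_{S_i}^2$. For $K_{S_i}\cdot D_i$: the singular analogue of Lemma \ref{lemma:bettiMfs}(iii), $e(X_i)=2(e(S_i)-p_a(D_i)+1)$, bounds $p_a(D_i)$ (since $e(X_i)$ is bounded from the bounds on $b_1,b_2$ and on $b_3=2(b_1+p_a(D_i)-1)$), and adjunction $2p_a(D_i)-2=(K_{S_i}+D_i)\cdot D_i$ bounds $K_{S_i}\cdot D_i$. Hence $z_i^2$, and so $K_{X_i}^3=y^3-6z_i^2$, is bounded.

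The main technical obstacle will be verifying the singular versions of Lemmas \ref{lemma:bettiMfs} and \ref{lem:f_*p1} and of Noether's formula, and controlling the correction terms introduced by the terminal singularities of $X_i$ and by the (mild) singularities of $S_i$; the hypothesis that $\Xi(X_i)$ is bounded is designed precisely to keep these corrections uniformly bounded. Once these singular analogues are in place, the strong rigidity imposed by $b_2(X_i)=2$ collapses the entire argument to a boundedness statement on the rank-one lattice $\NS(S_i)_\Q$.
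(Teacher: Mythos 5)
There is a genuine gap, and it sits exactly where your argument diverges from what the hypotheses can support. Your reduction ends at bounding $K_{S_i}\cdot D_i$, which you propose to obtain from $p_a(D_i)$ via adjunction, bounding $p_a(D_i)$ by combining the two identities of Lemma \ref{lemma:bettiMfs}(iii): $e(X_i)=2(e(S_i)-p_a(D_i)+1)$ and $b_3(X_i)=2(b_1(X_i)+p_a(D_i)-1)$. But these two identities are equivalent to one another modulo Poincar\'e duality and the relations $b_2(X_i)=b_2(S_i)+1$, $b_1(X_i)=b_1(S_i)$: substituting the second into $e(X_i)=2-2b_1(X_i)+2b_2(X_i)-b_3(X_i)$ reproduces the first identically, with the $p_a(D_i)$ terms cancelling. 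So no bound on $p_a(D_i)$ comes out unless you independently bound $b_3(X_i)$ (equivalently $h^{1,2}(X_i)$), and nothing in the hypotheses controls $b_3$: the invariant triple only sees degree-two cohomology, and $\chi(X_i,\mathcal O_{X_i})$ only sees the $h^{0,q}$. (In Theorem \ref{thm:MFS:surfaces} this very step works precisely because the $X_i$ are homeomorphic, so $b_3$ is constant along the sequence; that crutch is absent here, and indeed conic bundles with fixed $b_2=2$ and fixed $\chi(\mathcal O)$ can have arbitrarily large $b_3$.)

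The paper's proof avoids the discriminant curve entirely. It fixes a basis $x,y$ of $H^2(X_0,\Q)$ with $x^3=0$, $x^2\neq 0$ and $y=c_1(X_0)$, writes $c_1(X_i)=a_ix+b_iy$, bounds $b_i$ using $K_{X_i}\cdot C=-2$ on a general fibre (whose class is proportional to $x^2$), and then bounds $a_i$ by substituting into the singular Riemann--Roch formula $48\chi(X_i,\mathcal O_{X_i})=c_1(X_i)\cdot p_1(X_i)-c_1(X_i)^3+T_i$, where $T_i$ is bounded because $\Xi(X_i)$ is. The result is a quadratic equation in $a_i$ with leading coefficient $-3b_ix^2y\neq 0$, fixed lower-order coefficients and bounded value, whence $a_i$ is bounded. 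This is where the hypotheses on $\chi(X_i,\mathcal O_{X_i})$ and $\Xi(X_i)$ actually do their work, and it sidesteps both the missing bound on $b_3$ and all the structure theory of conic bundles with terminal total space (smoothness of $S_i$, reducedness and nodality of $D_i$, the pushforward formulas of Lemmas \ref{lemma:bettiMfs} and \ref{lem:f_*p1}) that your approach would additionally have to establish. If you want to salvage your route, you would need to add a hypothesis bounding $b_3(X_i)$; as stated, the proposition does not provide one.
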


\begin{proof}
	Let $h$ be an ample generator of $\Pic(S_0)$ and let $x\in H^2(X_0,\Z)$ be a primitive class proportional to $f_0^*h$. Then $x^3=0$, $x^2 \ne 0$ and letting $y=c_1(X_0)$ we can write
	$$
	H^2(X_0,\Q)=x\cdot \Q\oplus y\cdot \Q.
	$$
	
	From now on we will use the isomorphism $H^2(X_i,\Q)\cong H^2(X_0,\Q)$ to think about $x$ and $y$ as basis elements of $H^2(X_i,\Q)$. 
	Note that $x^2y \ne 0$ since already $x^3=0$ and $x^2 \ne 0$. 
	Moreover, the space of elements in $H^2(X_0,\C)$ with zero cube is a union of three lines (through $0$) and so we may assume without loss of generality that for each $i$, the pullback of the generator of $H^2(S_i,\Z)$ to $X_i$ is a multiple of $x$. 
	In particular, $x^2$ is a multiple of the class of the general fibre of $X_i\longrightarrow S_i$ for all $i$.
	
	We have
	$$
	c_1(X_i) = a_i\cdot x+ b_i\cdot y, 
	$$
	for some $a_i,b_i \in \Q$.  
	Since $\Xi(X_i)$ is bounded, there is a positive integer $r$ such that $r K_{X_i}$ is Cartier for any $i$. In particular, $ra_i, rb_i \in \Z$. 
	Since $K_{X_i}.C=-2$ where $C$ is a general fibre, we deduce that the sequence of $b_i$ is bounded.

	We are going to bound the sequence of $a_i$.
	By the singular version of Riemann--Roch \cite[Corollary 10.3]{Reid87} we get
	$$
	48 \chi(X_i,\mathcal O_{X_i})= c_1(X_i).p_1(X_i) - c_1(X_i)^3 + T_i
	$$
	where
	$$
	T_i=\sum_{p_\alpha} \left (r(p_\alpha)-\frac 1 {r(p_\alpha)}\right ),
	$$
	and the sum runs over all the points of all the baskets of $X_i$. Note that $T_i$ is a bounded sequence since $\Xi(X_i)$ is bounded. This implies that
	$$
	48 \chi(X_i,\mathcal O_{X_i})= -3a_i^2b_ix^2y + a_ix( 2p_1(X_i) -3b_i^2y^2) +b_i^3y^3 +b_iyp_1(X_i)+T_i
	$$
	and so the $a_i$ are also bounded, since  $b_ix^2y \ne 0$ and $\chi(X_i,\mathcal O_{X_i})$ are bounded.	
\end{proof}

\begin{proof}[Proof of Theorem \ref{thm:uniruled}]
	Let $f:X\to Y$ be the birational contraction as in Theorem \ref{thm:uniruled}.
	By the proof of \cite[Corollary 1.5]{cascini-tasin}, we know that $|K_X^3 - K_Y^3|$ is bounded by a constant depending only on the Betti numbers of $X$ and on the cubic form $F_X$. To conclude we need to bound $K_Y^3$ in terms of the topology of $X$.

	Since $Y$ is a Mori fibre space and $\Delta_{F_Y} \ne 0$, we deduce that either $Y$ is a Fano variety or $Y$ has a conic bundle structure over a surface with second Betti number 1 (otherwise there would be an element in $H^2(X,\C)$ with square zero, which would imply that $\{F=0\}$ has a singular point and so $\Delta_{F_Y} = 0$). 
	Since terminal Fano threefolds are bounded, we are left with the conic bundle case.
	
	Proposition \ref{prop:triple} assures us that the invariant triple of $Y$ is determined up to finite ambiguity by the invariant triple of $X$. 
	Moreover, the Euler characteristic $\chi(Y,\mathcal O_Y)=\chi(X,\mathcal O_X)$ is bounded in terms of the Betti numbers of $X$ and by \cite[Prop. 3.3]{CZ14} we also have a bound for  $\Xi(Y)$ depending only on $b_2(X)$. 
	The result follows then from Proposition \ref{prop:MFS:singular}.
\end{proof}

\section*{Acknowledgments}
We thank P.\ Cascini for many useful conversations on the topic of this manuscript. 
The first author is supported by DFG grant ``Topologische Eigenschaften von Algebraischen Variet\"aten'' (project nr.\ 416054549).
The second author was supported by the DFG grant ``Birational Methods in Topology and Hyperk\"ahler Geometry" and is a member of the GNSAGA group of INdAM.


\begin{thebibliography}{9}   


\bibitem{BCT}
C.\ Bisi, P.\ Cascini, L.\ Tasin, {\em A remark on the Ueno-Campana's threefold}, Michigan Math.\ J.\ 65 (2016), no.\ 3, 567--572.

\bibitem{cascini-tasin}
P.\ Cascini and L.\ Tasin, {\em On the Chern numbers of a smooth threefold},  Trans.\ Amer.\ Math.\ Soc.\ 370 (2018), no.\ 11, 7923--7958.

\bibitem{CZ14}
P.\ Cascini and D.\ -Q.\ Zhang, {\em Effective finite generation for adjoint rings},  Ann.\ Inst.\ Fourier (Grenoble) 64 (2014), no.\ 1, 127--144. 

\bibitem{Chen}
 H.-K.\ Chen,  {\em Betti numbers in three dimensional minimal model program}, Preprint arXiv:1605.04372.

\bibitem{ChenHacon11}
J.A.\ Chen and C.D.\ Hacon, {\em Factoring 3-fold flips and divisorial contractions to curves}, J. Reine Angew.\ Math.\ 657 (2011), 173--197. 		


\bibitem{HP15}
A.\ H\"oring and T.\ Peternell, {\em Mori fibre spaces for K\"ahler threefolds},  J.\ Math.\ Sci.\ Univ.\ Tokyo 22 (2015), no.\ 1, 219--246.

\bibitem{HP16}
A.\ H\"oring and T.\ Peternell, {\em Minimal models for K\"ahler threefolds},  Invent.\ Math.\ 203 (2016), no.\ 1, 217--264. 




\bibitem{kotschick-JTOP}
D.\ Kotschick, {\em Chern numbers and diffeomorphism types of projective varieties}, J.\ of Topology \textbf{1} (2008), 518--526.

\bibitem{kotschick-PNAS}
D.\ Kotschick, {\em Characteristic numbers of algebraic varieties}, Proc.\ Natl.\ Acad.\ Sci.\ USA \textbf{106} (2009), 10114--10115. 

\bibitem{kotschick-advances}
D.\ Kotschick, {\em Topologically invariant Chern numbers of projective varieties}, Adv.\ Math.\ \textbf{229} (2012), 1300--1312.

\bibitem{kotschick-schreieder}
D.\ Kotschick and S.\ Schreieder, {\em The Hodge ring of K\"ahler manifolds}, Compositio Math.\ \textbf{149} (2013), 637--657.


\bibitem{LSZ} 
J.-F.\ Lafont, G.\ Sorcar, F. Zheng, {\em Some K\"ahler structures on products of 2-spheres}, L'Enseign.\ Math. 64 (2018), 127--142.

\bibitem{lebrun}
C.\ LeBrun, {\em Topology versus Chern numbers of complex 3-folds}, Pacific J.\ Math.\ \textbf{191} (1999), 123--131.

\bibitem{LW90}
A.S.\ Libgober and J.W.\ Wood. {\em Uniqueness of the complex structure on K\"ahler manifolds of certain homotopy types},\ J. Differential Geom. \textbf{32} (1990), 139--154.

\bibitem{Lin}
H.-Y.\ Lin, {\em Algebraic approximations of compact Kähler threefolds}, Preprint, arXiv:1710.01083.

\bibitem{MST}
D.\ Martinelli, S.\ Schreieder and L.\ Tasin, {\em On the number and boundedness of minimal models of general type}, to appear in Annales scientifiques de l'ENS.


\bibitem{novikov}
S.P.\ Novikov, {\em On manifolds with free abelian fundamental group and their application}, Izv.\ Akad.\ Nauk SSSR Ser.\ Mat.\ \textbf{30} (1966), 207--246.

\bibitem{OV95}
C.\ Okonek and A.\ Van de Ven, {\em Cubic forms and complex 3-folds},  Enseign.\ Math. (2) 41 (1995), no.\ 3-4, 297--333.


\bibitem{schreieder-tasin}
S.\ Schreieder and L.\ Tasin, {\em Algebraic structures with unbounded Chern numbers}, J.\ of Topology 9 (2016), 849--860.

\bibitem{ST}
S.\ Schreieder and L.\ Tasin, {\em K\"ahler structures on spin 6-manifolds}, Math.\ Ann.\  373 (2019), no.\ 1-2, 397--419. %Math.\ Ann.\ (2017), https://doi.org/10.1007/s00208-017-1615-2.


\bibitem{Fano}
I.\ R.\ Shafarevich (ed.), {\em Algebraic geometry. V. Fano varieties}, Encyclopaedia of Mathematical Sciences, 47.\ Springer-Verlag, Berlin, 1999. iv+247 pp.


\bibitem{Reid87}
M.\ Reid, {\em Young person's guide to canonical singularities}, Algebraic geometry, Bowdoin, 1985, 345--414, 
Proc.\ Sympos.\ Pure Math., 46, Part 1, Amer.\ Math.\ Soc., Providence, RI, 1987. 




\end{thebibliography}
\end{document}